\def\QQ{\mathbb{Q}}
\def\b1{{\bf 1}}
\def\oN{{\overline N}}
\newtheoremstyle{upright}  
    {}  
    {}  
    {\upshape}  
    {}  
    {\bfseries}  
    {.}  
    {5pt plus 1pt minus 1pt}  
    {}  
\theoremstyle{upright}
\newtheorem{definition}{Definition}
\newtheorem{theorem}[definition]{Theorem}
\newtheorem{proposition}[definition]{Proposition}
\newtheorem{corollary}[definition]{Corollary}
\newtheorem{remark}[definition]{Remark}
\newcommand{\ZZ}{\mathbb{Z}}
\newcommand{\RR}{\mathbb{R}}
\newcommand{\calX}{\mathcal{X}}
\newcommand{\calN}{\mathcal{N}}
\newcommand{\bfSigma}{\mathbf{\Sigma}}
\title{Birational classification of toric orbifolds}
\author{Johannes Schmitt\footnote{Departement Mathematik, ETH Z\"urich, e-mail: johannes.schmitt@math.ethz.ch}}
\begin{document}

\maketitle

\vspace{-20pt}

\begin{abstract}
We give a complete classification of the torus-equivariant birational equivalence classes of smooth proper toric Deligne--Mumford stacks with trivial generic stabilizer in terms of their associated stacky fans.
\end{abstract}

\section{Introduction}
An \emph{algebraic orbifold} is a smooth separated irreducible Deligne--Mumford stack of finite type over a field $k$ (assumed to be algebraically closed of characteristic zero) with trivial generic stabilizer. 
Following \cite{KT} we say that such orbifolds $\calX, \calX'$ are birationally equivalent if there is a third algebraic orbifold $\widehat \calX$ admitting proper, birational and representable morphisms
\begin{equation} \label{eqn:first_hat}
 \calX \xleftarrow{f} \widehat \calX \xrightarrow{g} \calX'\,.   
\end{equation}
In our paper, we consider the torus-equivariant birational classification problem for \emph{smooth proper toric Deligne--Mumford (DM) stacks} with generically trivial stabilizer (see below for the precise definition). This problem was previously studied by Levchenko in \cite{Levchenko}, who considered the case of dimension $2$ and obtained some birational invariants. In our paper, we give a full classification in arbitrary dimension. 

\section{Main result}
To describe our results, we briefly recall the definition and basic properties of smooth toric DM-stacks.
These were introduced in \cite{BCS} and generalize the canonical cover of simplicial toric varieties (c.f. \cite[Theorem 4.11]{FMN}). 
Similar to the case of normal toric varieties, a smooth toric DM-stack is given by essentially convex-geometric data:
\begin{itemize}
\item a finitely generated abelian group $N$ of rank $d$; we denote the image of the natural map $N \to N_\QQ = N \otimes_{\ZZ} \QQ$ by $\oN$
\item a simplicial rational polyhedral fan $\Sigma$ in $N_\RR = N \otimes_{\ZZ} \RR$,
\item for each ray $\tau \in \Sigma(1)$ a vector $\rho_\tau \in \oN$ which generates $\tau$.
\end{itemize}
Given such a tuple $\mathbf \Sigma = (N, \Sigma, (\rho_\tau)_{\tau \in \Sigma(1)})$, called a \emph{stacky fan}, one constructs a smooth Deligne--Mumford stack $\calX_{\mathbf \Sigma}$ (see \cite{BCS}). The coarse moduli space of $\calX_{\mathbf \Sigma}$ is the toric variety associated to the fan $\Sigma$. We have that $N = \overline N$ is torsion-free if and only if $\calX_{\mathbf \Sigma}$ is an {orbifold}, i.e. has generically trivial stabilizer (see \cite[Lemma 7.15]{FMN}). Under this assumption, for $\sigma \in \Sigma$ we denote by $N_\sigma \subseteq N$ the sub-lattice spanned by the $\rho_\tau$ for $\tau \in \sigma(1)$. The index of $N_\sigma$ in $N \cap \mathrm{Span}_\QQ \sigma$ is the order of the generic stabilizer group on the torus-invariant stratum of $\calX_{\mathbf \Sigma}$ associated to $\sigma$.
We remark for later use that for a face $\pi \subseteq \sigma$ we have $N_\pi = N_\sigma \cap \mathrm{Span}_\QQ \pi$.

From now on, we only work with toric orbifolds $\calX_\bfSigma$, i.e. smooth toric DM-stacks with generically trivial stabilizer. As for toric varieties, they contain the algebraic torus $T = T_{\calX_\bfSigma} = \mathrm{Spec} (k[N^\vee])$ as an open dense substack. Following \cite{Levchenko}, we say that two such orbifolds $\calX, \calX'$ are $T$-equivariantly birationally equivalent if there exists a diagram \eqref{eqn:first_hat} such that $\widehat \calX$ is 
an orbifold with an action of $T$ and a dense equivariant embedding of $T$ such that the maps $f,g$ are $T$-equivariant.\footnote{By composing $f,g$ with the action of suitable points in $T(k)$ on $\calX, \calX'$, we can assume without loss of generality these maps restrict to the identity on $T$.}
It follows from \cite[Theorem 7.17]{FMN} that such an orbifold $\widehat \calX$ is a toric orbifold in the sense described above, i.e. given by $\widehat \calX = \calX_{\widehat{\bfSigma}}$ for a stacky fan $\widehat{\bfSigma} = (N, \widehat \Sigma, (\widehat \rho_\tau)_{\tau \in \widehat \Sigma(1)})$.

Given two toric DM-stacks $\calX_{\mathbf \Sigma'}, \calX_{\mathbf \Sigma}$ containing the same torus $T$, the identity on $T$ induces a $T$-equivariant birational map $\calX_{\mathbf \Sigma'} \dashrightarrow \calX_{\mathbf \Sigma}$. The following proposition gives a criterion when this map is a morphism, respectively a representable morphism, in terms of the stacky fans $\mathbf{\Sigma}, \mathbf{\Sigma}'$.

\begin{proposition} \label{Prop:morphism}
Let $\mathbf{\Sigma} = (N, \Sigma, (\rho_\tau)_{\tau \in \Sigma(1)})$ and $\mathbf \Sigma' = (N, \Sigma', (\rho'_{\tau'})_{\tau' \in \Sigma'(1)})$ be two stacky fans with the same underlying group $N$, such that $\Sigma, \Sigma'$ are full-dimensional\footnote{Here we say that a rational polyhedral fan in $N_\RR \cong \RR^d$ is full-dimensional if each cone of the fan is a face of a $d$-dimensional cone inside the fan. This condition is automatic for complete fans.} with the same support $|\Sigma| = |\Sigma'|$. Then
the birational map $\calX_{\mathbf \Sigma'} \dashrightarrow \calX_{\mathbf \Sigma}$ extending the identity of the torus $T = \mathrm{Spec} (k[N^\vee])$ is
\begin{itemize}
\item[(a)] a morphism, if and only if the identity on $N_\RR$ induces a map of fans $\Sigma' \to \Sigma$ such that for each $\tau' \in \Sigma'(1)$ and any cone $\sigma \in \Sigma$ containing $\tau'$, the element $\rho'_{\tau'}$ is an integral linear combination of the $\rho_\tau$ for $\tau \in \sigma(1)$,
\item[(b)] a representable morphism if and only if the identity on $N_\RR$ induces a map of fans $\Sigma' \to \Sigma$ and for any full-dimensional cones $\sigma \in \Sigma(d)$, $\sigma' \in \Sigma'(d)$ with $\sigma' \to \sigma$, we have an equality of sublattices $N_\sigma = N_{\sigma'}$. 
\end{itemize}
\end{proposition}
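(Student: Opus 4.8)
The plan is to reduce the assertion to a local computation on affine charts and there read off the condition from the sublattices $N_\sigma, N_{\sigma'} \subseteq N$. First I would dispose of the common \emph{map of fans} hypothesis common to (a) and (b): a morphism of stacks induces a $T$-equivariant morphism of coarse moduli spaces $X_{\Sigma'} \to X_\Sigma$ extending $\mathrm{id}_T$, and by the classical theory of toric varieties such a morphism exists only if the identity on $N_\RR$ is a map of fans, i.e.\ every cone of $\Sigma'$ lies in a cone of $\Sigma$; so I assume this from now on. Since $\Sigma'$ is full-dimensional, each maximal cone $\sigma' \in \Sigma'(d)$ then lies in a unique $\sigma \in \Sigma(d)$, the charts $U_{\sigma'} = [V_{\sigma'}/G_{\sigma'}]$ (with $V_{\sigma'} \cong \A^d$ built from the basis $(\rho'_{\tau'})$ of $N_{\sigma'}$ and $G_{\sigma'} = N/N_{\sigma'}$) cover $\calX_{\bfSigma'}$, and $U_{\sigma'}$ maps into $U_\sigma$. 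As being a (representable) morphism is local on source and target, the problem becomes to characterise when $[V_{\sigma'}/G_{\sigma'}] \dashrightarrow [V_\sigma/G_\sigma]$ restricting to $\mathrm{id}_T$ is a (representable) morphism. Using the Remark that $N_\pi = N_\sigma \cap \mathrm{Span}_\QQ \pi$ for a face $\pi \preceq \sigma$, I would also record that the integrality clause in (a) --- for every ray $\tau'$ and every cone $\sigma \ni \tau'$ --- is equivalent to the single family of inclusions $N_{\sigma'} \subseteq N_\sigma$ over all maximal pairs $\sigma' \subseteq \sigma$.

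For (a) the heart is the local claim: $[V_{\sigma'}/G_{\sigma'}] \to [V_\sigma/G_\sigma]$ over $\mathrm{id}_T$ is a morphism if and only if $N_{\sigma'} \subseteq N_\sigma$. For sufficiency, the inclusion $\iota \colon N_{\sigma'} \hookrightarrow N_\sigma$ dualises to $N_\sigma^\vee \to N_{\sigma'}^\vee$, which carries $\sigma^\vee \cap N_\sigma^\vee$ into $(\sigma')^\vee \cap N_{\sigma'}^\vee$ because $\sigma' \subseteq \sigma$; this yields a toric morphism $V_{\sigma'} \to V_\sigma$ equivariant for an induced homomorphism $G_{\sigma'} \to G_\sigma$, hence the desired morphism of quotient stacks, which restricts to $\mathrm{id}_T$. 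For necessity, I would pull back the smooth atlas $V_\sigma \to [V_\sigma/G_\sigma]$ (a $G_\sigma$-torsor) along $V_{\sigma'} \to \calX_{\bfSigma'} \to \calX_{\bfSigma}$; as $V_{\sigma'} \cong \A^d$ carries no nontrivial torsor under the finite diagonalisable group $G_\sigma$, the pullback is trivial and produces a lift $h \colon V_{\sigma'} \to V_\sigma$. Over the torus, $h$ becomes a lift of $T_{\sigma'} \to T$ through the covering $T_\sigma \to T$, equivalently a trivialisation of the $G_\sigma$-covering $T_\sigma \to T$ after pullback along $T_{\sigma'} \to T$; translating through $\pi_1^{\mathrm{et}}(T) \cong \widehat N$, this triviality is exactly $N_{\sigma'} \subseteq N_\sigma$. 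Finally the local morphisms glue, since they all agree with $\mathrm{id}_T$ on the dense torus and $\calX_{\bfSigma}$ is separated. I expect the \emph{main obstacle} to lie precisely here, in the local necessity: extracting from an abstract stack morphism the honest lift of torus covers and converting its existence into the lattice inclusion, where the triviality of $G_\sigma$-torsors on $\A^d$ and the fundamental-group bookkeeping must be handled with care.

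For (b) I would use that a morphism of Deligne--Mumford stacks is representable exactly when it is injective on the automorphism group of every geometric point. By the description recalled in the excerpt, the stabiliser of a point in the stratum of a cone $\pi$ is $(N \cap \mathrm{Span}_\QQ \pi)/N_\pi$; the deepest strata are the torus-fixed points attached to maximal cones, and at the fixed point of $\sigma' \in \Sigma'(d)$ (which maps to that of $\sigma \supseteq \sigma'$) the induced map on stabilisers is the natural projection $N/N_{\sigma'} \to N/N_\sigma$ produced above, injective precisely when $N_{\sigma'} = N_\sigma$. Conversely, assuming $N_{\sigma'} = N_\sigma$ for all maximal pairs, the face relation $N_\pi = N_\sigma \cap \mathrm{Span}_\QQ \pi$ lets me compute the kernel of the stabiliser map at an arbitrary stratum and check that it vanishes, so the morphism is representable; together with (a) (equality gives the inclusion, so the map is already a morphism) this yields (b).
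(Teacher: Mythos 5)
Your proposal is correct, and it splits against the paper as follows: your part (b) is essentially the paper's argument, while your part (a) takes a genuinely different route. For (a) the paper argues purely by citation — sufficiency of the fan-theoretic condition is \cite[Remark 4.5]{BCS} and necessity is \cite[Theorem 3.4]{StackyFans} — whereas you give a self-contained chart-by-chart proof: passing to coarse spaces for the map-of-fans condition, constructing $V_{\sigma'}\to V_\sigma$ equivariantly from $N_{\sigma'}\subseteq N_\sigma$ for sufficiency, and for necessity lifting along the atlas via vanishing of $H^1(\A^d,G_\sigma)$ (Kummer sequence: trivial Picard group and divisible units over algebraically closed $k$ of characteristic zero) and then reading off $N_{\sigma'}\subseteq N_\sigma$ from $\pi_1^{\mathrm{et}}(T)\cong\widehat N$, using that $\widehat{N_\sigma}\cap N=N_\sigma$ since $N_\sigma$ has finite index. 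This buys independence from \cite{StackyFans}, at the cost of two steps you assert in one line and which carry the real content: (i) the gluing of the local morphisms needs that a $2$-isomorphism between two maps from a normal source to the separated DM stack $\calX_{\bfSigma}$ extends uniquely from the dense torus — this holds because the relevant Isom-scheme is finite and unramified over the source (properness and unramifiedness of the diagonal), so the closure of the generic section is an isomorphism by Zariski's main theorem, and uniqueness makes the cocycle condition on triple overlaps automatic; (ii) your ``recorded'' equivalence of the ray-wise integrality clause with the inclusions $N_{\sigma'}\subseteq N_\sigma$ over maximal pairs uses not only the face relation $N_\pi=N_\sigma\cap\mathrm{Span}_\QQ\pi$ but also full-dimensionality and $|\Sigma|=|\Sigma'|$, to produce for a given ray $\tau'\subseteq\sigma$ a maximal cone $\sigma'\in\Sigma'(d)$ with $\tau'\preceq\sigma'$ contained in a maximal cone of $\Sigma$ over $\sigma$; the paper uses the same equivalence implicitly when it says in the proof of (b) that ``by part (a) we obtain a morphism if and only if $N_{\sigma'}\subseteq N_\sigma$''. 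One cosmetic correction: $G_{\sigma'}$ should be the Cartier dual $\mathrm{Hom}(N/N_{\sigma'},\mathbb{G}_m)$ rather than $N/N_{\sigma'}$ itself (harmless over algebraically closed $k$ of characteristic zero, and your stabilizer-injectivity computation in (b) — kernel $N_\sigma/N_{\sigma'}$, vanishing at general strata via the face relation, reduction to maximal cones — matches the paper's verbatim).
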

\begin{proof}
The fact that the fan-theoretic condition in (a) is sufficient is Remark 4.5 in \cite{BCS}, and the converse implication follows from \cite[Theorem 3.4]{StackyFans}. For part (b) we note that for any cone $\sigma \in \Sigma$, the generic stabilizer group of the associated torus-invariant subset of $\calX_\bfSigma$ is given by $N \cap \mathrm{Span}_\QQ \sigma / N_\sigma$ (as follows from \cite[Proposition 4.3]{BCS}). By part (a), we obtain a morphism if and only if $N_{\sigma'} \subseteq N_{\sigma}$, and the injectivity of stabilizer groups for the representability of this morphism is then equivalent to the other inclusion $N_{\sigma'} \supseteq N_{\sigma}$. We use here that it is sufficient to check the condition at the maximal cones since for a face $\pi \subseteq \sigma$ we have $N_\pi = N_\sigma \cap \mathrm{Span}_\QQ \pi$.
\end{proof}

Using this proposition, we see that fixing $\mathbf \Sigma$, the torus-equivariant morphisms $\calX_{\mathbf \Sigma'} \to \calX_{\mathbf \Sigma}$ are precisely induced by the choices of a simplicial subdivision $\Sigma' \to \Sigma$ and ray generators $(\rho'_{\tau'})_{\tau' \in \Sigma'(1)}$ such that condition (a)  is satisfied. Similarly, the representable morphisms correspond to subdivisions such that (b) is satisfied (in this case the ray generators are uniquely determined by condition (b)).

\begin{theorem} \label{thm:main}
Let $\calX_{\bfSigma}$ and $\calX_{\bfSigma'}$ be two proper $d$-dimensional toric orbifolds together with an identification $T = T_{\calX_{\bfSigma}} = T_{\calX_{\bfSigma}}$ of their tori. Then $\calX_{\bfSigma}$ and $\calX_{\bfSigma'}$ are $T$-equivariantly birationally equivalent if and only if for any $\sigma \in \Sigma(d)$ and $\sigma' \in \Sigma'(d)$ such that the interiors of $\sigma$ and $\sigma'$ intersect, we have $N_\sigma = N_\sigma'$.
\end{theorem}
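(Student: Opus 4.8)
The plan is to prove both implications by reducing everything to the representability criterion of Proposition~\ref{Prop:morphism}(b), using the coarse common refinement of $\Sigma$ and $\Sigma'$ as the bridge.

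\emph{Necessity.} Suppose $\calX_\bfSigma$ and $\calX_{\bfSigma'}$ are $T$-equivariantly birationally equivalent, witnessed by $\calX_\bfSigma \xleftarrow{f} \calX_{\widehat\bfSigma} \xrightarrow{g} \calX_{\bfSigma'}$ with $\widehat\bfSigma = (N,\widehat\Sigma,(\widehat\rho_\tau))$. Since $\calX_{\widehat\bfSigma}$ admits a proper morphism to the proper stack $\calX_\bfSigma$, it is itself proper, so $\widehat\Sigma$ is complete and in particular full-dimensional with the same support as $\Sigma$ and $\Sigma'$; as $f,g$ are representable, Proposition~\ref{Prop:morphism}(b) applies and shows in particular that $\widehat\Sigma$ refines both $\Sigma$ and $\Sigma'$. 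Now fix $\sigma\in\Sigma(d)$ and $\sigma'\in\Sigma'(d)$ whose interiors meet. The nonempty open set $\mathrm{int}(\sigma)\cap\mathrm{int}(\sigma')$ meets the interior of some maximal cone $\widehat\sigma\in\widehat\Sigma(d)$; since $\widehat\sigma$ lies in a cone of $\Sigma$ (resp.\ of $\Sigma'$) and its interior meets $\mathrm{int}(\sigma)$ (resp.\ $\mathrm{int}(\sigma')$), this forces $\widehat\sigma\subseteq\sigma$ and $\widehat\sigma\subseteq\sigma'$. Applying the lattice equality of Proposition~\ref{Prop:morphism}(b) to $f$ and to $g$ then yields $N_\sigma = N_{\widehat\sigma} = N_{\sigma'}$, as desired.

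\emph{Sufficiency.} Conversely, assume $N_\sigma = N_{\sigma'}$ whenever $\mathrm{int}(\sigma)\cap\mathrm{int}(\sigma')\neq\emptyset$. I would build the witnessing refinement directly. Start from the coarse common refinement $\Delta=\{\sigma\cap\sigma'\}$, a complete fan refining both $\Sigma$ and $\Sigma'$. To each cone $C\in\Delta$ I attach a full-rank sublattice $L_C\subseteq N\cap\mathrm{Span}_\QQ C$: for maximal $C$, picking the unique $\sigma,\sigma'$ with $C\subseteq\sigma\cap\sigma'$, the hypothesis forces $N_\sigma=N_{\sigma'}$ and I set $L_C=N_\sigma$; for lower-dimensional $C$ I set $L_C = L_{C'}\cap \mathrm{Span}_\QQ C$ for any maximal $C'\supseteq C$. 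That this is well-defined and face-compatible follows from the identity $N_\pi = N_\sigma\cap\mathrm{Span}_\QQ\pi$: two maximal cones of $\Delta$ sharing a face $C$ lie in cones $\sigma_1,\sigma_2$ of $\Sigma$ whose intersection $F$ is a common face, and $N_{\sigma_1}\cap\mathrm{Span}_\QQ F = N_F = N_{\sigma_2}\cap\mathrm{Span}_\QQ F$, which after intersecting with $\mathrm{Span}_\QQ C\subseteq \mathrm{Span}_\QQ F$ gives the claim. Using these lattices, the candidate generator of any ray $r$ (of $\Delta$ or of a later refinement) is the primitive vector of $L_C$ on $r$, with $C\in\Delta$ the smallest cone containing $r$; the same computation shows this is unchanged whether it is read off via $\Sigma$ or via $\Sigma'$, since both give the primitive vector of $N_\sigma\cap r = N_{\sigma'}\cap r$.

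\emph{Main obstacle and conclusion.} The crux is to refine $\Delta$ into a \emph{simplicial} fan $\widehat\Sigma$ that is moreover \emph{smooth with respect to the local lattices}, i.e.\ so that in every maximal cone $\widehat\sigma$ the primitive generators of its rays form a $\ZZ$-basis of the corresponding $L_C$; this is precisely the equality $N_{\widehat\sigma}=N_\sigma$ required by Proposition~\ref{Prop:morphism}(b), so once it is arranged the resulting stacky fan $\widehat\bfSigma=(N,\widehat\Sigma,(\widehat\rho_\tau))$ gives representable proper birational $T$-morphisms $f,g$ to $\calX_\bfSigma$ and $\calX_{\bfSigma'}$, and hence the required equivalence. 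Producing such a refinement is a relative form of toric resolution of singularities, and is where I expect the real work to lie. I would carry it out by induction on the dimension of cones, resolving the skeleton of $\Delta$ in increasing dimension: at each stage a cone $C$ is made smooth by star subdivisions using primitive generators of $L_C$, its proper faces already being smooth. Consistency across a wall shared by two maximal cones is automatic, because such a wall is resolved only once, at the stage of its own dimension, using its own unambiguous lattice $L_C$, while only the interiors of the top-dimensional cones are subdivided independently; the face-compatibility of the $L_C$ established above is what lets the standard termination argument go through with these varying lattices.
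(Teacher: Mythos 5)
Your necessity argument coincides with the paper's: both pass through Proposition~\ref{Prop:morphism}(b) applied to a maximal cone $\widehat\sigma$ of the common roof whose interior lies in $\mathrm{int}(\sigma)\cap\mathrm{int}(\sigma')$. For sufficiency, however, you take a genuinely different route. The paper never forms the coarse common refinement with glued lattices; instead it invokes a stacky analogue of a theorem of De Concini--Procesi \cite{DCPII}, producing $\widehat\bfSigma$ as an \emph{iterated stacky star subdivision of $\bfSigma$ alone} whose underlying fan happens to refine $\Sigma'$. This makes representability of $f$ automatic (stacky star subdivisions induce representable morphisms), and representability of $g$ then falls out of the hypothesis exactly as in the necessity step, via $N_{\sigma'}=N_\sigma=N_{\widehat\sigma}$ for $\mathrm{int}(\widehat\sigma)\subseteq\mathrm{int}(\sigma)\cap\mathrm{int}(\sigma')$. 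Your construction --- the refinement $\Delta=\{\sigma\cap\sigma'\}$ equipped with lattices $L_C$, whose well-definedness is precisely where the hypothesis enters and whose face-compatibility you correctly reduce to the identity $N_\pi=N_\sigma\cap\mathrm{Span}_\QQ\pi$, followed by a resolution relative to the local lattices --- is in substance the ``KM fan'' argument that the paper itself sketches in its closing Remark (citing \cite{GillamMolcho} for representability and a toric resolution of singularities), and your resolution step is the same multiplicity-reduction-with-local-lattices that the paper carries out in the proof of Corollary~\ref{cor:classification} via \cite[Section 8.2]{MR0495499}. So your proof is sound, and it leaves its one hard combinatorial step (the relative resolution) at roughly the same level of delegation as the paper leaves its own (the ``line-by-line'' stacky adaptation of \cite{DCPII}). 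The trade-off: the paper's route avoids defining or gluing any auxiliary lattices, at the cost of importing and stackifying an external theorem; yours makes the symmetry between $\Sigma$ and $\Sigma'$ manifest and yields both representable legs in one stroke, at the cost of the varying-lattice resolution --- the one place I would press for detail is the termination argument there (first make $\Delta$ simplicial by star subdivisions at existing rays, then check that star subdividing at a point of $L_C\cap\mathrm{int}(C)$ does not increase multiplicities in adjacent maximal cells, which works because $L_C=L_{C'}\cap\mathrm{Span}_\QQ C$ places the new generator in every relevant lattice, exactly as your face-compatibility provides).
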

\begin{proof}
By definition, we have that $\calX_{\bfSigma}$ and $\calX_{\bfSigma'}$ are $T$-equivariantly birationally equivalent if and only if we can find a toric orbifold $\widehat \calX$ (with torus $T$), together with representable, proper $T$-equivariant birational morphisms
\begin{equation} \label{eqn:birational_hat}
\begin{tikzcd}
& \widehat \calX \arrow[dl, "f", swap] \arrow[dr, "g"] & \\
\calX_{\bfSigma} & & \calX_{\bfSigma'}
\end{tikzcd}\
\end{equation}
such that $f,g$ restrict to the identity on $T$.
As mentioned before, it follows from \cite[Theorem 7.17]{FMN} that the orbifold $\widehat \calX$ is given by $\widehat \calX = \calX_{\widehat{\bfSigma}}$ for a stacky fan $\widehat{\bfSigma} = (N, \widehat \Sigma, (\widehat \rho_\tau)_{\tau \in \widehat \Sigma(1)})$. 

All toric orbifolds  in the diagram \eqref{eqn:birational_hat} are proper, so their fans are full-dimensional with support $N_\RR$. By Proposition \ref{Prop:morphism}, the fan $\widehat \Sigma$ is a refinement of $\Sigma, \Sigma'$. For $\sigma \in \Sigma(d)$ and $\sigma' \in \Sigma'(d)$ with overlapping interior, choose a cone $\widehat \sigma \in \widehat \Sigma(d)$ whose interior maps to $\mathrm{int}(\sigma) \cap \mathrm{int}(\sigma')$. Then the fact that $f,g$ are representable implies $N_\sigma = N_{\widehat \sigma} = N_{\sigma'}$ by Proposition \ref{Prop:morphism} (b).

Conversely, assume $N_\sigma = N_\sigma'$ for any $\sigma \in \Sigma(d)$ and $\sigma' \in \Sigma'(d)$ such that the interiors of $\sigma$ and $\sigma'$ intersect. 
Then we claim that there exists an iterated stacky star subdivision\footnote{This is a variant of the usual star subdivision of a fan along a cone which takes into account the chosen ray generators $\rho_\tau$, see \cite{EdidinMore} for the definition.} $\widehat \bfSigma$ of $\bfSigma$ such that $\widehat \Sigma$ is a refinement of $\Sigma'$. 
The analogous statement for non-stacky fans is proven in \cite{DCPII} (see the Lemmas in Sections 2.2 and 2.3 and the Theorem in Section 2.4 of \cite{DCPII}). The proof is effective, describing a procedure for choosing the sequence of cones to subdivide. Following the same algorithm line-by-line and replacing the primitive generators of rays with the chosen generators $\rho_\tau$ gives the desired result for stacky fans.
We claim that taking $\widehat \calX = \calX_{\widehat \bfSigma}$, we obtain a diagram \eqref{eqn:birational_hat} of proper representable morphisms as desired. Indeed, for the conditions of Proposition \ref{Prop:morphism} (b), we already checked that that the underlying fan $\widehat \Sigma$ of $\widehat \bfSigma$ refines $\Sigma, \Sigma'$. On the other hand, the map $f$ induced by iterated stacky star subdivision is representable, so for a cone $\widehat \sigma \in \widehat \Sigma(d)$ mapping to $\sigma \in \Sigma(d)$ and $\sigma' \in \Sigma'(d)$, we have $N_{\widehat \sigma} = N_\sigma$. But then $\mathrm{int}(\sigma) \cap \mathrm{int}(\sigma') \supseteq \mathrm{int}(\widehat \sigma) \neq \emptyset$, so by assumption $N_{\sigma'} = N_{\sigma} = N_{\widehat \sigma}$ as desired.
\end{proof}
As a consequence of the above result, we can give a complete classification of the birational equivalence classes of proper toric orbifolds. To state it, we introduce the following notion (which we did not find in this precise shape in the literature).
\begin{definition} \label{def:conical_partition}
A \emph{conical polyhedral partition} of $N_\RR$ is a finite collection $(C_i)_{i \in I}$ of non-empty subsets $\emptyset \neq C_i \subseteq N_\RR$ such that
\begin{itemize}
\item each $C_i$ is a finite union of full-dimensional rational polyhedral cones,
\item the union of the $C_i$ is all of $N_\RR$,
\item the interiors of the $C_i$ are pairwise disjoint.
\end{itemize}
\end{definition}
We claim that for such a conical polyhedral partition, one can in fact find a fan $\Sigma_0$ with support $N_\RR$ such that each of the $C_i$ is a union of cones in $\Sigma_0(d)$. Indeed, one way to obtain $\Sigma_0$ is to subdivide $N_\RR$ at each defining hyperplane of each of the full-dimensional rational polyhedral cones used to cover the sets $C_i$. Conversely, given a complete fan $\Sigma_0$ and a partition $I$ of $\Sigma(d)$ into non-empty subsets, the sets $C_i = \bigcup_{\sigma \in I} \sigma$ form a conical polyhedral partition.

\begin{definition}
A \emph{sublattice coloring} of $N$ is a conical polyhedral partition $(C_{N'})_{N' \in \mathcal{N}}$ indexed by a finite set $\mathcal{N}$ of finite-index sublattices $N' \subseteq N$ such that for any $N', N'' \in \mathcal{N}$, we have $C_{N'} \cap C_{N''} \cap N' = C_{N'} \cap C_{N''} \cap N''$.
\end{definition}
Given a stacky fan $\bfSigma$ with an underlying fan $\Sigma$ which is complete, we define 
\[\calN = \calN(\bfSigma) =\{N_\sigma : \sigma \in \Sigma(d)\}\]
as the set of lattices associated to maximal cones of $\Sigma$, and given such an $N' \in \calN$ we denote by
\[
C_{N'} = \bigcup_{\substack{\sigma\in \Sigma(d):\\N_\sigma = N'}} \sigma
\]
the associated union of maximal cones $\sigma$ with lattice $N_\sigma = N'$. We claim that $(C_{N'})_{N' \in \calN}$ is a sublattice coloring of $N$. Indeed, by the remark below Definition \ref{def:conical_partition} we see that $(C_{N'})_{N'}$ is a conical polyhedral partition of $N_\RR$. Thus the only non-trivial property to check is $C_{N'} \cap C_{N''} \cap N' = C_{N'} \cap C_{N''} \cap N''$. It follows from the fact that $C_{N'} \cap C_{N''}$ is a union of cones $\pi \in \Sigma$ combined with the previous observation that for $\pi$ a face of $\sigma \in \Sigma$ we have $N_\pi = N_\sigma \cap \mathrm{Span}_\QQ \pi$.


\begin{corollary} \label{cor:classification}
The $T$-equivariant birational equivalence classes of proper toric orbifolds $\calX_\bfSigma$ with torus $T = \mathrm{Spec}(k[N^\vee])$ are in bijection to sublattice colorings of $N$ by sending the class $[\calX_\bfSigma]$ to the sublattice coloring $(C_{N'})_{N' \in \calN(\bfSigma)}$ above.
\end{corollary}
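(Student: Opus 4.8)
The plan is to establish a bijection by showing two things: (1) the map $[\calX_\bfSigma] \mapsto (C_{N'})_{N' \in \calN(\bfSigma)}$ is well-defined and injective on birational classes, and (2) every sublattice coloring arises from some stacky fan. The well-definedness and injectivity should follow almost immediately from Theorem \ref{thm:main}, since that theorem already characterizes $T$-equivariant birational equivalence purely in terms of the lattices $N_\sigma$ attached to overlapping top-dimensional cones. The surjectivity is where the real work lies, since I must reconstruct a stacky fan from the combinatorial coloring data.

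First I would verify that the map is well-defined, i.e. that $T$-equivariantly birationally equivalent orbifolds yield the same sublattice coloring. Given two such orbifolds, Theorem \ref{thm:main} says $N_\sigma = N_{\sigma'}$ whenever $\sigma \in \Sigma(d)$ and $\sigma' \in \Sigma'(d)$ have overlapping interiors. I claim this forces the two colorings to coincide as set-theoretic partitions of $N_\RR$. The key point is that for a point $v \in N_\RR$ lying in the interior of both some $\sigma$ and some $\sigma'$, the lattice assigned by either coloring is determined: it is $N_\sigma = N_{\sigma'}$. Since the interiors of top cones cover $N_\RR$ up to a measure-zero set and the colored sets $C_{N'}$ are closed unions of full-dimensional cones, the equality of the lattice labels on a dense set of interior points forces $C_{N'}^\bfSigma = C_{N'}^{\bfSigma'}$ for each $N'$. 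Conversely, if the two colorings agree, then any overlapping $\sigma, \sigma'$ lie in a common $C_{N'}$ with $N_\sigma = N' = N_{\sigma'}$, so Theorem \ref{thm:main} yields birational equivalence; this gives injectivity.

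For surjectivity, I would start from an arbitrary sublattice coloring $(C_{N'})_{N' \in \calN}$ and use the remark following Definition \ref{def:conical_partition} to produce a complete fan $\Sigma_0$ refining the partition, so that each $C_{N'}$ is a union of cones in $\Sigma_0(d)$. The task is then to equip $\Sigma_0$ with ray generators $(\rho_\tau)$ so that the resulting stacky fan recovers the prescribed coloring, meaning $N_\sigma = N'$ precisely when $\sigma \subseteq C_{N'}$. The natural approach is to define, for each top cone $\sigma$ with $\sigma \subseteq C_{N'}$, the generators of its rays to be the primitive vectors of the rays of $\sigma$ in the lattice $N'$. The compatibility condition in the definition of a sublattice coloring, namely $C_{N'} \cap C_{N''} \cap N' = C_{N'} \cap C_{N''} \cap N''$, is exactly what guarantees that this assignment is consistent along shared faces: a ray $\tau$ lying in $\overline{C_{N'}} \cap \overline{C_{N''}}$ gets the same generator whether computed in $N'$ or in $N''$, since the primitive generator lives in $\mathrm{Span}_\QQ(\tau) \cap N' = \mathrm{Span}_\QQ(\tau) \cap N''$ by that condition. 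I expect this consistency verification to be the main obstacle, as I must check that the $\rho_\tau$ are unambiguously defined for rays on the boundaries between the colored regions, and that the resulting lattices $N_\sigma$ recover exactly $N'$ rather than some proper sublattice.

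The final step is bookkeeping: having built a stacky fan $\bfSigma$ with underlying complete simplicial fan $\Sigma_0$ (refined if necessary to ensure simpliciality, which does not change the coloring since subdividing a cone $\sigma$ keeps all sub-cones within the same $C_{N'}$ and preserves their lattices) and torsion-free $N$, I confirm that $\calX_\bfSigma$ is a proper toric orbifold and that its associated coloring $\calN(\bfSigma)$ equals the given one. Combined with the well-definedness and injectivity established earlier, this shows the map is a bijection, completing the proof.
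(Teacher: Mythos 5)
Your well-definedness and injectivity argument matches the paper's (both are immediate from Theorem \ref{thm:main}), but your surjectivity construction has a genuine gap, and it is exactly at the point you flag as ``the main obstacle'' without resolving it. Taking $\rho_\tau$ to be the primitive generator of $\tau$ in the lattice $N'$ does \emph{not} in general make the spanned lattice $N_\sigma$ equal to $N' \cap \mathrm{Span}(\sigma)$: for a simplicial cone the sublattice spanned by the primitive ray generators can be a proper finite-index sublattice, with index equal to the multiplicity of the cone. Concretely, take $N = \ZZ^2$, the trivial coloring $C_N = \RR^2$, and a fan $\Sigma_0$ containing the cone $\sigma$ spanned by $(1,0)$ and $(1,2)$; your recipe assigns these primitive vectors as ray generators, but they span an index-$2$ sublattice of $\ZZ^2$, so the resulting stacky fan induces the wrong coloring (its class maps to a coloring with lattice $N_\sigma \subsetneq N$ on $\sigma$). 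The consistency of $\rho_\tau$ across boundary rays, which you do verify correctly via the condition $C_{N'} \cap C_{N''} \cap N' = C_{N'} \cap C_{N''} \cap N''$, is the easy half; it does not help with the index problem, which is a phenomenon internal to a single cone and a single lattice $N'$.

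The missing idea, and the actual content of the paper's surjectivity proof, is a multiplicity-reduction step. The paper introduces, for each simplicial cone $\sigma$, the two lattices $N_\sigma = \mathrm{Span}(\sigma) \cap N'$ (prescribed by the coloring) and the sublattice $N'_\sigma$ spanned by the primitive ray generators, defines $\mathrm{mult}(\sigma) = [N_\sigma : N'_\sigma]$, and then performs iterated star subdivisions, following the classical toric resolution procedure of \cite[Section 8.2]{MR0495499} adapted to the lattices $N_\sigma$, to arrive after finitely many steps at a refinement $\Sigma_0'$ on which every cone has multiplicity $1$. Only then do the primitive ray generators span the prescribed lattices, yielding a stacky fan $\bfSigma_0'$ whose associated coloring is the given one. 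Your proposal would be repaired by inserting exactly this subdivision loop; note that it is compatible with your (correct) observation that subdividing a cone keeps all sub-cones within the same $C_{N'}$ and preserves the lattices $N_\sigma$, which is what guarantees the procedure does not change the coloring.
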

\begin{proof}
Theorem \ref{thm:main} implies that two proper toric orbifolds are birationally equivalent if and only if they have the same associated sublattice coloring. This shows that their birational equivalence classes inject into the set of sublattice colorings. 

Conversely, given such a sublattice coloring $(C_{N'})_{N' \in \mathcal{N}}$ choose a fan $\Sigma_0$ such that each $C_{N'}$ is a union of maximal cones of $\Sigma_0$. By performing a barycentric subdivision, we can assume without loss of generality that $\Sigma_0$ is simplicial. For each cone $\sigma \in \Sigma_0$ choose $N' \in \calN$ such that $\sigma \subseteq C_{N'}$ and define $N_\sigma = \mathrm{Span}(\sigma) \cap {N'}$. By the definition of sublattice coloring, this sublattice $N_\sigma \subseteq N$ is independent of the choice of the set $C_{N'}$ containing $\sigma$. 
We would like to construct a stacky fan $\bfSigma_0'$ whose underlying fan $\Sigma_0'$ refines $\Sigma_0$ and such that the lattices on the cones of $\Sigma_0'$ are the restrictions of the lattices $N_\sigma$.

To obtain $\bfSigma_0'$, observe that associated to each cone $\sigma$ of $\Sigma_0$ we have two lattices: the lattice $N_\sigma$ above, and its sublattice $N_\sigma'$ spanned by the primitive generators $\rho_\tau \in \tau \cap N_\tau \cong \mathbb{N}$ of the rays $\tau \in \sigma(1)$. We define the multiplicity $\mathrm{mult}(\sigma)$ as the index of $N_\sigma'$ in $N_\sigma$. 
If some of the multiplicities are strictly greater than $1$, we can perform iterated star subdivision on $\Sigma_0$ to reduce their multiplicities, as explained in \cite[Section 8.2]{MR0495499} (adapting the procedure explained there to use the lattices $N_\sigma$ for operations on the cone $\sigma$). After finitely many steps we arrive at a simplicial refinement $\Sigma_0'$ of $\Sigma_0$ with multiplicity $1$ on each of the cones, giving the desired stacky fan $\bfSigma_0' = (N, \Sigma_0', (\rho_{\tau'})_{\tau' \in \Sigma_0'(1)})$.
\end{proof}

\begin{remark}
The proofs of Theorem \ref{thm:main} and Corollary \ref{cor:classification} featured many combinatorial and convex geometric operations (like the multiplicity reduction above). A more conceptual and geometric approach is possible via the theory of possibly singular toric DM-stacks and their associated \emph{KM fans} in the sense of \cite{GillamMolcho}. Here a (lattice) KM fan is a triple $F=(N, \Sigma, (N_\sigma)_{\sigma \in \Sigma})$ of a rational polyhedral fan $\Sigma$ in the lattice $N$ together with choices of sublattices $N_\sigma \subseteq N$ associated to its cones which are finite index in $\mathrm{Span}(\sigma) \cap N$ and compatible under face inclusions.
Associated to $F$, the paper \cite{GillamMolcho} defines a toric, but possibly singular DM-stack $\calX(F)$. Any stacky fan $\bfSigma$ defines a KM fan $F$ by taking $N_\sigma$ to be the lattice spanned by the chosen ray generators $\rho_\tau$ of $\sigma$ as before. The constructed toric DM-stacks $\calX(\bfSigma)$ and $\calX(F)$ agree and for a KM fan $F$, we have that $\calX(F)$ is smooth if and only if $F$ comes from a stacky fan.

With this formalism in mind, we can sketch a geometric interpretation of the proofs presented above. Indeed, given complete stacky fans $\bfSigma, \bfSigma'$ with compatible lattices on their maximal cones as in Theorem \ref{thm:main}, choose any common refinement $\Sigma''$ of their underlying fans. Then the cones $\sigma''$ of $\Sigma''$ carry natural sublattices $N_{\sigma''}$ of $N \cap \mathrm{Span}(\sigma)$ induced from the associated lattices of their coarsenings in $\Sigma, \Sigma'$ (which are compatible by assumption). Denote by $F''=(N, \Sigma'', (N_{\sigma''})_{\sigma''})$ the associated KM fan. 
By functoriality, the associated toric DM-stack $\calX(F'')$ admits toric maps to $\calX_\bfSigma, \calX_{\bfSigma'}$ which are representable by \cite[Theorem 3.11.2]{GillamMolcho}.
Composing these with a toric resolution of singularities $\widehat \calX \to \calX(F'')$ gives the diagram \eqref{eqn:birational_hat} showing that $\calX_\bfSigma, \calX_{\bfSigma'}$ are $T$-equivariantly birationally equivalent.

Similarly, in Corollary \ref{cor:classification} given a sublattice coloring $(C_{N'})_{N'}$ choose a fan $\Sigma_0$ such that each $C_{N'}$ is a union of maximal cones of $\Sigma_0$.
Given a cone $\sigma \in \Sigma_0$ contained in some $C_{N'}$, we obtain the lattice $N_\sigma = N' \cap \mathrm{Span}(\sigma)$ (which is independent of the choice of $C_{N'}$ containing $\sigma$ by the definition of sublattice colorings). Again we obtain a KM fan $F=(N, \Sigma_0, (N_\sigma)_\sigma)$, and we can choose any toric resolution $\widehat \calX$ of $\calX(F)$, whose birational equivalence class then induces the given sublattice coloring  $(C_{N'})_{N'}$. 

\end{remark}


\section*{Acknowledgements}
Our deep gratitude goes to Andrew Kresch, for his guidance to the literature on toric orbifolds, numerous pieces of advice on technical challenges related to stacky fans and detailed feedback on a preliminary version of the present article, improving exposition and clarity of our arguments. We also thank Sam Molcho for pointing out the theory of KM fans and helpful suggestions concerning toric resolution of singularities.

\bibliographystyle{alpha}
\bibliography{main}

%

\end{document}